\newcommand\nthalias[1]{\AddToHook{env/#1/begin}{\crefalias{lemma}{#1}}}
\crefname{section}{Section}{Sections}
\crefname{subsection}{\S}{\S\S}
\crefname{subsubsection}{\S}{\S\S}
\theoremstyle{plain}
\newtheorem{lemma}{Lemma}[section]
\newtheorem{proposition}[lemma]{Proposition}
\newtheorem{theorem}[lemma]{Theorem}
\theoremstyle{plain}
\theoremstyle{plain}
\newtheorem{definition}[lemma]{Definition}
\newtheorem{example}[lemma]{Example}
\newtheorem{remark}[lemma]{Remark}
\crefname{definition}{definition}{definitions}
\crefname{ex}{example}{examples}
\crefname{exs}{example}{examples}
\crefname{remark}{remark}{remarks}
\crefname{remarks}{remark}{remarks}
\crefname{convention}{convention}{conventions}
\crefname{notation}{notation}{notations}
\crefname{table}{table}{tables}
\crefname{lemma}{lemma}{lemmas}
\crefname{proposition}{proposition}{propositions}
\crefname{propositionN}{proposition}{propositions}
\crefname{corollary}{corollary}{corollaries}
\crefname{corollaryN}{corollary}{corollaries}
\crefname{theorem}{theorem}{theorems}
\crefname{theoremN}{theorem}{theorems}
\crefname{enumi}{}{}
\crefname{assumption}{assumption}{Assumptions}
\crefname{construction}{construction}{Constructions}
\crefname{question}{question}{Questions}
\crefname{equation}{}{}
\numberwithin{equation}{section}
\theoremstyle{nonumberplain}
\newtheorem{proof}{Proof}
\newcommand\pf[1]{\newtheorem{#1}{Proof of \Cref{#1}}}
\newcommand\cC{{\mathcal C}}
\newcommand\cM{{\mathcal M}}
\newcommand\cS{{\mathcal S}}
\DeclareMathOperator{\id}{id}
\title{Coassociative structures on self-injective algebras}
\author{Alexandru Chirvasitu}
\begin{document}

\date{}

\newcommand{\Addresses}{{
  \bigskip
  \footnotesize

  \textsc{Department of Mathematics, University at Buffalo, Buffalo,
    NY 14260-2900, USA}\par\nopagebreak \textit{E-mail address}:
  \texttt{achirvas@buffalo.edu}

}}

\maketitle

\begin{abstract}
  For general finite-dimensional self-injective algebra $A$ we construct a family of injective coassociative coproducts $A\to A\otimes A$, all $A$-bimodule morphisms. In particular such structures always exist, confirming a conjecture of Hernandez, Walton and Yadav. The coproducts are indexed by subsets of $\{1,\cdots,m(i)\}\times  \{1,\cdots,m(\nu^{-1}i)\}$, where $A\cong \mathrm{End}_{\Lambda}(M)$ is the general form of a self-injective algebra in terms of a basic Frobenius $\Lambda$, the $m(i)$, $1\le i\le n$ are the multiplicities of the indecomposable projective $\Lambda$-modules in $M$, and $\nu$ is the Nakayama permutation of $\Lambda$. We also characterize those among the coproducts introduced in this fashion, in terms this combinatorial data, which are counital. 
\end{abstract}

\noindent {\em Key words:
  Frobenius;
  Nakayama permutation;
  basic algebra;
  comultiplication;
  counit;
  idempotent;
  orthogonal;
  self-injective
}

\vspace{.5cm}

\noindent{MSC 2020: 16D50; 16L60; 16T15; 18M05; 16U40; 16D40; 16D60; 16D20
  
}


\section*{Introduction}

The present note is motivated by questions posed in \cite{MR4834012} in the context of analyzing self-dual structures pertaining to \emph{Frobenius $\Bbbk$-algebras} ($\Bbbk$ being a field, here and throughout the paper). Recall (e.g. \cite[\S 27.2]{bw}) that one way to define the latter is as
\begin{itemize}[wide]
\item unital, associative $\Bbbk$-algebras;
\item and simultaneously counital, coassociative \emph{coalgebras} (\cite[\S 1.1]{bw}, \cite[Definition 1.1.3]{mntg}, etc.);

\item with the comultiplication $A\xrightarrow{\Delta}A\otimes A$ being an $A$-bimodule morphism. 
\end{itemize}
One checks immediately that the latter condition, given the other two, is equivalent to the multiplication $A\otimes A\to A$ being an $A$-bi\emph{co}module morphism instead (hence the opening sentence's ``self-duality'' ). Finite dimensionality is automatic \cite[\S 27.9]{bw} for Frobenius algebras, as is \emph{self-injectivity} \cite[Proposition 3.14]{MR1653294}: $A$ will be injective as both a left and right $A$-module.

It is in this context that \cite{MR4834012} analyzes how and to what extent some comultiplicative structure survives weakening the Frobenius condition. Specifically, \cite[Theorems 1.7 and 1.9]{MR4834012} construct non-counital (but one-to-one, so very much non-trivial) comultiplications on various classes of self-injective (finite-dimensional) algebras:
\begin{itemize}[wide]
\item certain self-injective algebras constructed based on cyclic quivers and referred to in \cite[Definition 1.6]{MR4834012} as \emph{NSY algebras} (also the \emph{bound quiver algebras} \cite[pre Lemma I.1.5]{sy-bk1} featuring in \cite[Theorem IV.6.15]{sy-bk1}); 

\item and respectively the \emph{weak Hopf algebras} of \cite[Definition 2.1]{MR1726707}. 
\end{itemize}
In both cases the coproducts are attached uniformly to individual members of the class of algebras in question in such a manner that it will be counital precisely when the algebra is Frobenius (as opposed to only self-injective). \cite[Conjecture 1.10]{MR4834012} (which affords some flexibility of interpretation) voices an expectation that such structures obtain for arbitrary finite-dimensional self-injective algebras. We prove that conjecture in the following form. 

\begin{theorem}\label{th:ncounit}
  Every self-injective finite-dimensional algebra $A$ admits an injective coassociative $A$-bimodule morphism $\Delta:A\to A\otimes A$.
\end{theorem}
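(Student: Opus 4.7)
The plan is to first reduce to the basic case and then exploit the Frobenius structure there. For the reduction, I would invoke the standard Morita-theoretic fact that any finite-dimensional self-injective $A$ has the form $A \cong \mathrm{End}_{\Lambda}(M)$, where $\Lambda$ is the basic algebra underlying $A$ and $M$ is a progenerator. Being basic and self-injective, $\Lambda$ is automatically Frobenius, and one decomposes $M = \bigoplus_{i=1}^n P_i^{m(i)}$ with $P_i = \Lambda e_i$ the indecomposable projective $\Lambda$-modules, all in accordance with the setup described in the abstract.

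Next I would note that coassociativity is automatic for any $A$-bimodule morphism $\Delta \colon A \to A \otimes A$, and can therefore be set aside. Such a $\Delta$ is determined by $\xi := \Delta(1)$, and the bimodule constraint forces the ``bimodule centrality'' condition $(a \otimes 1)\xi = \xi(1 \otimes a)$ for all $a \in A$. Writing $\xi = \sum_\alpha x_\alpha \otimes y_\alpha$ and expanding each side using the two equivalent expressions $\Delta(a) = (a\otimes 1)\xi = \xi(1\otimes a)$ yields
\[
(\Delta \otimes \id)(\xi) = \sum_{\alpha,\beta} x_\beta \otimes (y_\beta x_\alpha) \otimes y_\alpha, \qquad (\id \otimes \Delta)(\xi) = \sum_{\alpha,\beta} x_\alpha \otimes (y_\alpha x_\beta) \otimes y_\beta,
\]
which agree upon relabeling $\alpha \leftrightarrow \beta$. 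Thus the theorem reduces to exhibiting a bimodule-central $\xi \in A \otimes A$ for which the map $a \mapsto (a \otimes 1)\xi$ is one-to-one.

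To manufacture such a $\xi$, I would exploit the Frobenius data of $\Lambda$. The Frobenius form provides a bimodule-central Casimir element $c_\Lambda \in \Lambda \otimes \Lambda$, which via $1 = \sum_i e_i$ decomposes as $c_\Lambda = \sum_i c_i$ with $c_i \in \Lambda e_i \otimes e_{\nu^{-1}(i)} \Lambda$; the Nakayama permutation $\nu$ enters precisely because the Frobenius pairing identifies the socle of $P_i$ with the top of $P_{\nu^{-1}(i)}$, so the ``dual'' idempotent to $e_i$ on the right of $c_\Lambda$ is $e_{\nu^{-1}(i)}$. Using the generalized matrix-algebra picture of $A = \mathrm{End}_\Lambda(M)$, each $c_i$ can be lifted to elements $\xi_{i,k,\ell} \in A \otimes A$ by placing it in the ``$(k,\ell)$-matrix slot'' of the $i$-th block, one for every $(k,\ell) \in \{1,\ldots,m(i)\} \times \{1,\ldots,m(\nu^{-1}(i))\}$. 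Setting $\xi := \sum_i \sum_{(k,\ell) \in S_i} \xi_{i,k,\ell}$ for any family of subsets $S_i$ recovers exactly the combinatorial parameter set flagged in the abstract.

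The main obstacle will be injectivity of the resulting $\Delta$. Bimodule centrality of $\xi$ should follow routinely from that of each $c_i$ once the matrix-unit bookkeeping is in place. For injectivity I would argue block-by-block: for each primitive idempotent $f = f_{i,k}$ of $A$, the restriction $\Delta|_{fA}$ must be one-to-one, and this should reduce to non-degeneracy of the Frobenius pairing on the corresponding piece $e_i \Lambda e_j$ of $\Lambda$, provided each $S_i$ is non-empty so that the $i$-component of $\Delta$ faithfully detects the corresponding matrix block of $A$. Keeping this interplay of the matrix multiplicities $m(i)$ and the Nakayama-twisted placement $m(\nu^{-1}(i))$ honest across all blocks simultaneously is where the real combinatorial work will lie.
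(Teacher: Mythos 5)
Your overall strategy is the same as the paper's: reduce to $A\cong\mathrm{End}_\Lambda(M)$ with $\Lambda$ basic Frobenius, observe that coassociativity is automatic for bimodule maps (your Sweedler-style computation of $(\Delta\otimes\id)\xi=(\id\otimes\Delta)\xi$ is correct and is exactly the content the paper isolates in its monoidal-category lemma), and then lift the Casimir element $c_\Lambda$ of $\Lambda$ to an invariant element of $A\otimes A$ indexed by the sets $S_i$. The injectivity sketch (restrict to matrix blocks, use counitality/non-degeneracy of the Frobenius form on $\Lambda$, require each $S_i\neq\emptyset$) also matches the paper's argument, which runs the injective comultiplication of $\Lambda$ through isomorphisms onto independent summands of $A\otimes A$.

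There is, however, one genuine gap, and it sits at the heart of the construction rather than in the ``bookkeeping'': your lift $\xi_{i,k,\ell}$ is underspecified in a way that matters. Write $c_i=\sum_\bullet\varphi_\bullet\otimes\psi_\bullet$ with $\varphi_\bullet\colon P_i\to P_j$ and $\psi_\bullet\colon P_j\to P_{\nu^{-1}(i)}$. The pair $(k,\ell)$ only pins down the \emph{outer} copy indices (which copy of $P_i$ the left tensorand starts from, which copy of $P_{\nu^{-1}(i)}$ the right tensorand lands in); it says nothing about which copies of the \emph{intermediate} projective $P_j$ the two tensorands pass through. The correct lift is forced to be the diagonal sum over \emph{all} $m(j)$ copies,
\begin{equation*}
  \xi_{i,k,\ell}=\sum_{j}\sum_{t=1}^{m(j)}\sum_{\bullet}\varphi_\bullet^{\,t\leftarrow k}\otimes\psi_\bullet^{\,\ell\leftarrow t},
\end{equation*}
and this summation is precisely what makes $\xi$ commute with the ``matrix-unit'' identifications $\id_j^{t\leftarrow t'}$ between copies of $P_j$ inside $A$; invariance under $\Lambda$ alone does not suffice, since $A$ is generated by $\Lambda$ together with these identifications. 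The most naive reading of ``place $c_i$ in the $(k,\ell)$-slot'' (route everything through copy $1$ of each $P_j$) produces an element that is \emph{not} bimodule-invariant. A secondary, non-fatal remark: for the bare existence statement you do not actually need the refined fact that $c_\Lambda\in\bigoplus_i\Lambda e_i\otimes e_{\nu^{-1}(i)}\Lambda$ (a statement about the counit's support that requires its own proof); invariance of $c_\Lambda$ under the idempotents $e_j$ already guarantees that the intermediate index $j$ matches across the tensor sign, which is all the construction uses.
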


A more elaborate version of the statement (\Cref{th:ncounit-bis}) will in fact produce, for every self-injective finite-dimensional $A$, a family of comultiplications indexed by combinatorial data consisting of numerical structural invariants of $A$. \Cref{pr:whencounit} amplifies the construction by further characterizing those among the constructed coproducts which are counital.

To expand, we remind the reader \cite[\S 1]{sy-gen} that every self-injective algebra as in the statement of \Cref{th:ncounit} is of the form
\begin{equation}\label{eq:amis}
  A\cong \Lambda(m(1),\cdots,m(n)) := \mathrm{End}_{\Lambda}(M_1\oplus\cdots\oplus M_n)
\end{equation}
where
\begin{itemize}[wide]
\item $\Lambda$ is a self-injective {\it basic} (hence Frobenius) algebra. Being basic means the decomposition
  \begin{equation}\label{eq:lbd}
    \Lambda\cong P_1\oplus \cdots\oplus P_n
  \end{equation}
  in $\cM_{\Lambda}$ into indecomposable projectives has mutually non-isomorphic summands: $P_i\not\cong P_j$ for $i\ne j$.
\item $M_i\cong P_i^{m(i)}$ is a direct sum of copies of $P_i$.
\end{itemize}
The comultiplications provided by \Cref{th:ncounit-bis} are indexed by subsets
\begin{equation*}
  \cS(i)\subseteq \{1,\cdots,m(i)\}\times  \{1,\cdots,m(\nu^{-1}i)\}
  ,\quad
  1\le i\le n
\end{equation*}
with $\nu$ denoting the \emph{Nakayama permutation} of the Frobenius algebra $\Lambda$ (inverse of the permutation denoted by the same symbol \cite[post Theorem IV.6.1]{sy-bk1}).

\subsection*{Acknowledgments}

I am grateful for helpful comments, suggestions and literature pointers from C. Walton and H. Yadav. 

\section{Preliminaries}\label{se.prel}

Algebras, unless specified otherwise, are unital, associative, and finite-dimensional over a fixed field $\Bbbk$. Algebra subscripts indicate module sidedness: $\mathrm{Hom}_A(-,-)$ means right-$A$-module morphisms, ${}_A\mathrm{Hom}(-,-)$ left-module morphisms, etc.

Idempotent elements
\begin{equation*}
  e^2=e\in A
\end{equation*}
will feature prominently below, so we remind the reader (e.g. \cite[\S I.5, discussion preceding Lemma 5.6]{sy-bk1}) that
\begin{itemize}[wide]
\item idempotents $e,f\in A$ are {\it orthogonal} if $ef=fe=0$;
\item and an idempotent is {\it primitive} if it cannot be decomposed as a sum of two non-zero orthogonal idempotents.
\end{itemize}

We make frequent (and implicit) use of the identifications \cite[Proposition 21.6]{lam}
\begin{equation*}
  \mathrm{Hom}_A(fA,eA)\cong eAf\cong {}_A\mathrm{Hom}(Ae,Af):
\end{equation*}
in the first case $eAf$ left-multiplies $fA$ into $eA$, and similarly in the other case (via right multiplication instead).

\cite[Lemma 2.12]{MR4834012} notes that for an algebra $A$ (assumed unital), specifying a coassociative $A$-bimodule morphism
\begin{equation}\label{eq:delta}
  \Delta:A\to A\otimes A
\end{equation}
is equivalent to selecting an element $t:=\sum a_i\otimes b_i$ in the bimodule $A$, {\it invariant} in the sense that $at=ta$ for all $a\in A$; that element will then simply be $\Delta(1)$. 

In general, for an $A$-bimodule $M$, a bimodule morphism $A\to M$ is again simply an invariant element $m\in M$, i.e. one belonging to $M^{(A)}$ of \cite[\S 10.4]{pierce}:
\begin{equation*}
  M^{(A)}:=\{m\in M\ |\ am=ma,\ \forall a\in A\}. 
\end{equation*}
As before, that element is the image of $1$ through the desired bimodule map $A\to M$. \cite[Lemma 2.12]{MR4834012} then notes that given such an element in $A\otimes A^{(A)}$, the resulting map \Cref{eq:delta} is automatically coassociative. This last observation is in fact a particular instance of the simple remark noted in \Cref{le:autoassoc}.

We will refer to {\it monoidal categories} \cite[\S VII.1]{mcl} $(\cC,\otimes,{\bf 1})$, typically denoting by
\begin{equation*}
  \alpha:(-\otimes -)\otimes - \cong -\otimes (-\otimes -)
\end{equation*}
the {\it associator} and by
\begin{equation*}
  \lambda: ({\bf 1}\otimes -)\cong \id
  \quad\text{and}\quad
  \rho: (-\otimes {\bf 1})\cong \id
\end{equation*}
the natural isomorphisms pertaining to unitality.

\begin{lemma}\label{le:autoassoc}
  Let $(\cC,\otimes,{\bf 1},\alpha,\lambda,\rho)$ be a monoidal category, $c\in \cC$ an object and $\varphi:{\bf 1}\to c$ a morphism.

  The two compositions
  \begin{equation*}
    {\bf 1}
    \xrightarrow{\quad\varphi\quad}
    c
    \xrightarrow{\quad\lambda^{-1}\quad}
    {\bf 1}\otimes c
    \xrightarrow{\quad\varphi\otimes\id\quad}c\otimes c
  \end{equation*}
  and
  \begin{equation*}
    {\bf 1}
    \xrightarrow{\quad\varphi\quad}
    c
    \xrightarrow{\quad\rho^{-1}\quad}
    c\otimes {\bf 1}
    \xrightarrow{\quad\id\otimes\varphi\quad}c\otimes c
  \end{equation*}
  are equal. 
\end{lemma}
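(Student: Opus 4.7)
The plan is to reduce each of the two composites to the same canonical morphism ${\bf 1}\to c\otimes c$, by combining naturality of the unitors with the functoriality (interchange law) for $\otimes$, and then to invoke the classical coherence identity $\lambda_{\bf 1}=\rho_{\bf 1}$ on the unit object.

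Concretely, I would first apply naturality of $\lambda^{-1}$ to $\varphi:{\bf 1}\to c$, obtaining the equality $\lambda_c^{-1}\circ\varphi=(\id_{\bf 1}\otimes\varphi)\circ\lambda_{\bf 1}^{-1}$; the interchange law then collapses $(\varphi\otimes\id_c)\circ(\id_{\bf 1}\otimes\varphi)$ to $\varphi\otimes\varphi$, so the first composite equals $(\varphi\otimes\varphi)\circ\lambda_{\bf 1}^{-1}$. The mirror-image manipulation using naturality of $\rho^{-1}$ and the same interchange identity rewrites the second composite as $(\varphi\otimes\varphi)\circ\rho_{\bf 1}^{-1}$.

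The remaining step is to identify $\lambda_{\bf 1}^{-1}$ with $\rho_{\bf 1}^{-1}$ as morphisms ${\bf 1}\to {\bf 1}\otimes {\bf 1}$, which is the classical coherence identity $\lambda_{\bf 1}=\rho_{\bf 1}$ of Kelly. I would simply quote it from \cite[\S VII.1]{mcl} rather than rederive it from the pentagon and triangle axioms, as doing so would add nothing to the argument.

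The only point requiring care is bookkeeping: keeping the naturality squares oriented correctly (so that one really may pull $\varphi$ past $\lambda^{-1}$ and $\rho^{-1}$ in the stated direction) and confirming that the relevant instances of $\otimes$-interchange apply. This is not an essential obstacle, and the entire proof should fit in a handful of lines.
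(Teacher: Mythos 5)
Your proposal is correct and is essentially the paper's own argument: both composites are rewritten, via naturality of the unitors and the interchange law for $\otimes$, as $(\varphi\otimes\varphi)$ precomposed with $\lambda_{\bf 1}^{-1}=\rho_{\bf 1}^{-1}$, and the identity $\lambda_{\bf 1}=\rho_{\bf 1}$ is quoted from \cite[\S VII.1]{mcl}. No gaps.
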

\begin{proof}
  That both are equal to 
  \begin{equation*}
    {\bf 1}
    \xrightarrow{\quad\lambda=\rho\quad}
    {\bf 1}\otimes {\bf 1}
    \xrightarrow{\quad\varphi\otimes\varphi\quad}
    c\otimes c
  \end{equation*}
  follows from the functoriality of `$\otimes$', the naturality of $\lambda$ and $\rho$, and the fact that
  \begin{equation*}
    \lambda=\rho:{\bf 1}\to {\bf 1}\otimes{\bf 1}. 
  \end{equation*}
  (by definition: \cite[\S VII.1, equation (8)]{mcl}).
\end{proof}

To recover \cite[Lemma 2.12]{MR4834012} from \Cref{le:autoassoc} take
\begin{itemize}[wide]
\item $(\cC,\otimes,{\bf 1})$ to be the category ${}_A\cM_A$ of bimodules, with $\otimes_A$ as the tensor product and $A$ as the monoidal unit;
\item and the object $c\in \cC$ to be $A\otimes A$, noting that
  \begin{equation*}
    c\otimes c = (A\otimes A)\otimes_A (A\otimes A)\cong A\otimes A\otimes A.
  \end{equation*}
\end{itemize}
The conclusion of \Cref{le:autoassoc} is then precisely the coassociativity of $\Delta$.


\section{Non-counital comultiplications}\label{se:ncounit}

Some of the notation and language surrounding \Cref{eq:amis} will be useful in proving \Cref{th:ncounit} and expanding the discussion beyond that result. Following \cite[\S IV.6]{sy-bk1}:

\begin{definition}\label{def:candec}
  Let $A$ be a finite-dimensional $\Bbbk$-algebra. A {\it canonical decomposition} for $A$ is a decomposition
  \begin{equation}\label{eq:candec}
    1=\sum_{i=1}^n\sum_{j=1}^{m(i)} e_{ij}
  \end{equation}
  of the unit $1=1_A$ as a sum of primitive pairwise-orthogonal idempotents so that
  \begin{itemize}[wide]
  \item for each $i$, all $e_{ij}A$ are isomorphic to a single (indecomposable) projective right $A$-module $P_i$;
  \item while $e_{ij}A\not\cong e_{i'j'}A$ as soon as $i\ne i'$.
  \end{itemize}
  Given a canonical decomposition, we also write
  \begin{equation*}
    e_i:=\sum_{j=1}^{m(i)}e_{ij}. 
  \end{equation*}
  for the idempotent collecting all copies of $P_i$.
\end{definition}

Some useful notation:
\begin{itemize}[wide]
\item For indices $1\le i,j\le n$ numbering the indecomposable $\Lambda$-projectives we write
  \begin{equation*}
    \Lambda_{j\leftarrow i}\subset \Lambda\cong \mathrm{End}_{\Lambda}(P_1\oplus\cdots\oplus P_n)
  \end{equation*}
  for the $\Lambda$-morphisms $P_i\to P_j$, extended by zero elsewhere on \Cref{eq:lbd}.
\item We will fix and number copies $P_{is}$, $1\le s\le m(i)$ whose direct sum (ranging over all $i$ and $s$) is precisely
  \begin{equation*}
    M=M_1\oplus\cdots\oplus M_n = P_1^{m(1)}\oplus\cdots\oplus P_n^{m(n)}. 
  \end{equation*}
\item Consider a morphism $\varphi:P_i\to P_j$. We write $\varphi^{t\leftarrow s}$ for the corresponding morphism $P_{is}\to P_{jt}$, obtained from $\varphi$ via the identifications
  \begin{equation*}
    P_i\cong P_{is},\ P_j\cong P_{jt}.
  \end{equation*}
\item In particular, we have identifications $\id_i^{t\leftarrow s}:P_{is}\to P_{it}$.
\item Morphisms $P_{is}\to P_{jt}$ (such as the $\varphi^{t\leftarrow s}$ above) are also regarded as endomorphisms of $M$ (and hence elements of $A$), in the obvious fashion: extend them by zero on all summands
  \begin{equation*}
    P_{jt},\quad (j,t)\ne (i,s).
  \end{equation*}
\item Having identified $P_i\cong P_{i1}$, we regard $\Lambda$ as a subspace (non-unital subalgebra) of $A$, again by extending endomorphisms by zero.
\item Extending the notation $\Lambda_{j\leftarrow i}\subset \Lambda$, we write
  \begin{equation*}
    A_{j\leftarrow i}^{t\leftarrow s}\subset A
  \end{equation*}
  for the image of
  \begin{equation}\label{eq:morts}
    \Lambda_{j\leftarrow i}
    \cong 
    \mathrm{Hom}_{\Lambda}(P_i,P_j)\ni
    \varphi\mapsto \varphi^{t\leftarrow s}
    \in A.
  \end{equation}
  and
  \begin{equation*}
    A_{j\leftarrow i}:=\bigoplus_{s,t}A^{t\leftarrow s}_{j\leftarrow i}.
  \end{equation*}
  We then have
  \begin{equation*}
    A
    =
    \bigoplus_{i,j}\bigoplus_{s=1}^{m(i)}\bigoplus_{t=1}^{m(j)} A_{j\leftarrow i}^{t\leftarrow s}
    =
    \bigoplus_{i,j} A_{j\leftarrow i}.
  \end{equation*}
\end{itemize}

\pf{th:ncounit}
\begin{th:ncounit}
  We prove the claim in stages: constructing what we prove to be a bimodule map $\Delta:A\to A\otimes A$ first, and then showing that that map is one-to-one.

  \begin{enumerate}[(1),wide]
  \item {\bf The construction.} Per \cite[Lemma 2.12]{MR4834012}, it is enough to find an element $x\in A\otimes A$ invariant for the $A$-bimodule structure:
    \begin{equation}\label{eq:inv}
      ax=xa,\ \forall a\in A. 
    \end{equation}
    Because $\Lambda$ is Frobenius, we already have an element
    \begin{equation*}
      y = \sum_{\ell} \varphi_\ell \otimes \psi_{\ell}\in \Lambda\otimes\Lambda\subseteq A\otimes A
    \end{equation*}
    satisfying \Cref{eq:inv} for $a\in \Lambda\subseteq A$. Decomposing the tensorands therein in a basis compatible with the decomposition
    \begin{equation*}
      \Lambda = \bigoplus_{i,j}\Lambda_{j\leftarrow i},
    \end{equation*}
    we can rewrite $y$ as
    \begin{equation}\label{eq:ysum}
      y = \sum_{i,i',j}\sum_{\bullet} \varphi_{\bullet, j\leftarrow i}\otimes \psi_{\bullet, i'\leftarrow j}
      \quad\text{with}\quad
      \varphi_{\bullet, j\leftarrow i}\in \Lambda_{j\leftarrow i},\ \psi_{\bullet, i'\leftarrow j}\in \Lambda_{i'\leftarrow j};
    \end{equation}
    the fact that the same index $j$ appears at both extremes follows from our condition $ay=ya$, $a\in \Lambda$.

    To obtain $x$, simply ``spread'' each summand \Cref{eq:ysum} of $y$ over all copies of $P_j$:
    \begin{equation*}
      x = \sum_{i,i',j} \sum_{t=1}^{m(j)} \sum_{\bullet} \varphi^{t\leftarrow 1}_{\bullet, j\leftarrow i}\otimes \psi^{1\leftarrow t}_{\bullet, i'\leftarrow j}.
    \end{equation*}
    To conclude, note that
    \begin{itemize}[wide]
    \item we have $ax=xa$ for $a\in \Lambda$ because the same condition held for $y$, and
      \begin{equation*}
        a\in \Lambda\Rightarrow ax=ay,\ xa=ya. 
      \end{equation*}
    \item we also have
      \begin{equation*}
        \id_j^{t\leftarrow s} x = x \id_j^{t\leftarrow s},
      \end{equation*}
      essentially by construction: both are equal to
      \begin{equation*}
        \sum_{i,i'}\sum_{\bullet}\varphi^{t\leftarrow 1}_{\bullet,j\leftarrow i}\otimes \psi^{1\leftarrow s}_{\bullet,i'\leftarrow j}
      \end{equation*}
    \end{itemize}
    The conclusion \Cref{eq:inv} follows, since $A$ is generated as an algebra by its (non-unital) subalgebra $\Lambda$ and the morphisms $\id_j^{t\leftarrow s}$.
    
  \item {\bf Injectivity.} The comultiplication on $\Lambda$ we started with, induced by $y$ via
    \begin{equation*}
      \Lambda\ni a\mapsto ay\in \Lambda\otimes\Lambda,
    \end{equation*}
    is counital. Because, as noted, for $a\in \Lambda$ we have
    \begin{equation*}
      \Delta(a) = ax = ay\in \Lambda\otimes\Lambda\subseteq A\otimes A,
    \end{equation*}
    it follows that $\Delta$ is one-to-one when restricted to
    \begin{equation*}
      \Lambda\cong \bigoplus_{i,j}A_{j\leftarrow i}^{1\leftarrow 1}.
    \end{equation*}
    Next, note that (by construction) the comultiplication $\Delta$ is compatible with the morphisms $(\bullet)^{t\leftarrow s}$ of \Cref{eq:morts} in the sense that
    \begin{equation}\label{eq:laaa}
      \begin{tikzpicture}[auto,baseline=(current  bounding  box.center)]
        \path[anchor=base] 
        (0,0) node (l) {$\Lambda_{j\leftarrow i}$}
        +(4,.5) node (u) {$A_{j\leftarrow i}^{t\leftarrow s}$}
        +(4,-.5) node (d) {$\bigoplus A_{j\leftarrow \bullet}^{1\leftarrow 1}\otimes A_{\bullet\leftarrow i}^{1\leftarrow 1}$}
        +(8,0) node (r) {$\bigoplus A_{j\leftarrow \bullet}^{t\leftarrow 1}\otimes A_{\bullet\leftarrow i}^{1\leftarrow s}$}
        ;
        \draw[->] (l) to[bend left=6] node[pos=.5,auto] {$\scriptstyle (\bullet)^{t\leftarrow s}$} (u);
        \draw[->] (u) to[bend left=6] node[pos=.5,auto] {$\scriptstyle \Delta$} (r);
        \draw[->] (l) to[bend right=6] node[pos=.5,auto,swap] {$\scriptstyle \Delta$} (d);
        \draw[->] (d) to[bend right=6] node[pos=.5,auto,swap] {$\scriptstyle (\bullet)^{t\leftarrow 1}\otimes (\bullet)^{1\leftarrow s}$} (r);
      \end{tikzpicture}
    \end{equation}
    commutes. Because the bottom left-hand arrow is injective so is the top right-hand one (the north-eastward arrows being isomorphisms). The conclusion follows from the fact that the spaces $A_{j\leftarrow i}^{t\leftarrow s}$ (at the top of \Cref{eq:laaa}) are complementary direct summands of $A$ as
    \begin{equation*}
      1\le i,j\le n\quad\text{and}\quad 1\le s\le m(i),\ 1\le t\le m(j)
    \end{equation*}
    vary, while the rightmost spaces in \Cref{eq:laaa} are, similarly, mutually independent summands of $A\otimes A$.
  \end{enumerate}
  This finishes the proof. 
\end{th:ncounit}

\subsection{Prior work}\label{subse:pre}

It might be instructive to spell out how the comultiplication provided by \Cref{th:ncounit} compares to, say, that of \cite[Theorem 2.14]{MR4834012}. In that paper's notation, the self-injective algebra being studied is
\begin{equation*}
  A=\Lambda(m_0,\cdots,m_{n-1})
\end{equation*}
for the basic Frobenius bound-quiver algebra $\Lambda=B_{n,\ell}$. The proof of \cite[Theorem 2.14]{MR4834012} describes the element $\Delta(1)$ that determines the comultiplication. In the particular case when $A=B_{n,\ell}$, i.e. all $m_i$ are 1, that element specializes (again, in that paper's notation) to
\begin{equation*}
  B_{n,\ell}\otimes B_{n,\ell}
  \ni
  \Delta(1) = \sum_{i=0}^{n-1}\sum_{k=0}^{\ell-1} X_{i,k}^{0,0}\otimes X_{i+k-\ell+1,\ell-1-k}^{0,0}.
\end{equation*}
Passing to arbitrary $m_i$ and applying the ``spreading'' procedure described in the proof of \Cref{th:ncounit}, we then obtain
\begin{equation}\label{eq:d1}
  A\otimes A
  \ni
  \Delta(1) = \sum_{i=0}^{n-1}\sum_{r_i=0}^{m_i-1}\sum_{k=0}^{\ell-1} X_{i,k}^{r_i,0}\otimes X_{i+k-\ell+1,\ell-1-k}^{0,r_i}.
\end{equation}

We can check directly that the element \Cref{eq:d1} is invariant for the $A$-bimodule structure on $A\otimes A$: repeated use of the multiplication formulas for the elements $X_{\cdot,\cdot}^{\cdot,\cdot}$ given in \cite[Proposition 2.11]{MR4834012} shows that right-multiplying \Cref{eq:d1} by $X_{i,j}^{r,s}$ will annihilate all summands corresponding to
\begin{itemize}[wide]
\item indices $i'\ne i$ in the outer sum;
\item and $r_i\ne r$ in the middle summation. 
\end{itemize}
It will also annihilate the terms with
\begin{equation*}
  \ell-1-k+j\ge \ell\Longleftrightarrow k < j,
\end{equation*}
so that only the sum
\begin{equation*}
  \Delta(1)X_{i,j}^{r,s}  = \sum_{k=j}^{l-1} X_{i,k}^{r,0}\otimes X_{i+k-\ell+1,\ell-1-k+j}^{0,s} 
\end{equation*}
remains. Similarly, left multiplication by the same element $X_{i,j}^{r,s}$ will annihilate
\begin{itemize}[wide]
\item the terms indexed by $i'\ne i+j$ in the outer sum;
\item or $r_i\ne s$ in the middle one. 
\end{itemize}
This leaves
\begin{equation*}
  X_{i,j}^{r,s}\Delta(1) = \sum_{k'=0}^{\ell-1-j} X_{i,k'+j}^{r,0}\otimes X_{i+k'+j-\ell+1,\ell-1-k'}^{0,s}. 
\end{equation*}
Plainly, these two sums coincide upon relabeling $k\leftrightarrow k'+j$. 

\subsection{A family of comultiplications}\label{subse:fam}

We revisit \Cref{th:ncounit} to emphasize that there is quite a bit of freedom in producing comultiplications on self-injective algebras. Before stating the result, note that for a basic (hence Frobenius) self-injective algebra \Cref{eq:lbd} we can be more specific on what the comultiplication
\begin{equation*}
  \Delta:\Lambda\to \Lambda\otimes\Lambda
\end{equation*}
and the counit $\varepsilon:\Lambda\to \Bbbk$ look like. Write $\nu$ for the {\it Nakayama permutation} of $\Lambda$ (as in \cite[\S 1]{sy-gen}):
\begin{equation}\label{eq:nperm}
  \text{\emph{socle}}
  \ 
  \xlongequal[\quad]{\quad\text{\cite[\S I.7, Example 2]{stenstr_quot}}\quad}:
  \ 
  \mathrm{soc}(P_i)\cong \mathrm{top}(P_{\nu(i)})
  \ 
  :\xlongequal[\quad]{\quad}
  \ 
  \text{\emph{co-socle}}.
\end{equation}

\begin{remark}
  The convention \Cref{eq:nperm} matches that of \cite[p. 136]{sy-gen} (and \cite[\S 2]{MR4834012}), but {\it not} that of \cite[p. 377]{sy-bk1}, where the Nakayama permutation is the inverse of our $\nu$.
\end{remark}

A characterization of the Nakayama permutation alternative to \Cref{eq:nperm}, is as follows. Denote by $e_i\in \Lambda$ the primitive idempotents that respectively cut out the summands $P_i$: $e_i\Lambda\cong P_i$. We then have
\begin{equation}\label{eq:lr}
  e_i\Lambda \cong (\Lambda e_{\nu(i)})^*,
\end{equation}
where the right-hand side denotes the vector-space dual of the left module in question (or what would have been denoted by $D(\Lambda e_{\nu(i)})$ in \cite[\S I.8, discussion preceding Proposition 8.16]{sy-bk1}). That \Cref{eq:lr} repackages \Cref{eq:nperm} follows easily from \cite[\S I.8, Lemma 8.22]{sy-bk1}.

Then, the element $y\in \Lambda\otimes\Lambda$ inducing $\Delta$ by $\Delta(a)=ay$ will be of the form
\begin{equation}\label{eq:yspec}
  y = \sum_{i,j}\sum_{\bullet} \varphi_{\bullet, j\leftarrow i}\otimes \psi_{\bullet, \nu^{-1}(i)\leftarrow j},
\end{equation}
while the counit sends the ``maximally degenerate'' morphisms
\begin{equation}\label{eq:through}
  P_{i}\to \mathrm{top}(P_{i})\cong \mathrm{soc}(P_{\nu^{-1}i})\subseteq P_{\nu^{-1}i}
\end{equation}
to non-zero scalars and (roughly speaking) annihilates morphisms in $\Lambda_{j\leftarrow i}$ not of this form. We will formalize this in \Cref{th:topp}. A piece of terminology that will come in handy, borrowed from \cite[\S 1.3, Theorem 4]{cmz}:

\begin{definition}\label{def:fropair}
  For a Frobenius algebra $\Lambda$, a {\it Frobenius pair} $(\varepsilon,y)$ consists of
  \begin{itemize}[wide]
  \item an element $y\in \Lambda\otimes\Lambda$ invariant under the bimodule action;
  \item and the counit $\varepsilon\in \Lambda^*$ of the comultiplication
    \begin{equation}\label{eq:delind}
      \Lambda\ni a\stackrel{\Delta}{\longrightarrow} ay=ya\in \Lambda\otimes\Lambda
    \end{equation}
    induced by $y$. 
  \end{itemize}
  We will also occasionally refer to the individual components of a Frobenius pair: $\varepsilon$ is the {\it (Frobenius) counit} and $y$ is the {\it Frobenius element (or tensor)}.
\end{definition}

\begin{remark}
  The items $(\varepsilon,y)$ that constitute a Frobenius pair are also the $E$ and respectively $\beta$ of \cite[\S 27.1]{bw}.
\end{remark}

In the following statement, the notation introduced in \Cref{def:candec} is in use. We also need

\begin{definition}\label{def:maxdeg}
  A non-zero module morphism is {\it maximally degenerate} if it factors through a simple module.

  For modules $M$ and $N$ over an algebra $A$ we denote by $\mathrm{Hom}_{A}^{0}(M,N)$ the linear span of maximally degenerate morphisms $M\to N$. Its elements (which we refer to as {\it small}) are precisely those morphisms $M\to N$ that factor through the socle of $N$ (and dually, the co-socle of $M$).
\end{definition}

In preparation for stating \Cref{th:topp}, consider a Frobenius algebra $\Lambda$ with a canonical decomposition \Cref{eq:candec}. For every $1\le i\le n$ we have
\begin{equation*}
  e_{\nu^{-1}i}\Lambda e_i\cong \mathrm{Hom}_{\Lambda}(e_{\nu^{-1}i}\Lambda,e_i\Lambda),
\end{equation*}
and the corresponding small morphism space
\begin{equation}\label{eq:ee0}
  (e_{\nu^{-1}i}\Lambda e_i)^0 := \mathrm{Hom}^0_{\Lambda}(e_{\nu^{-1}i}\Lambda,e_i\Lambda),
\end{equation}
can be identified with
\begin{equation}\label{eq:sml}
  \mathrm{Hom}_{\Lambda}\left(\mathrm{top}(P_i)^{m(i)},\ \mathrm{soc}(P_{\nu^{-1}i})^{m(\nu^{-1}i)}\right).
\end{equation}
Since we are here assuming $\Lambda$ to be Frobenius, we have
\begin{itemize}[wide]
\item $m(i)=m(\nu^{-1}i)$ \cite[\S IV.6, Theorem 6.2]{sy-bk1};
\item and the two simple modules $\mathrm{top}(P_i)$ and $\mathrm{soc}(P_{\nu^{-1}i})$ featuring in \Cref{eq:sml} are isomorphic.  
\end{itemize}
Writing $S_i:=\mathrm{top}(P_i)$, \Cref{eq:sml} is
\begin{equation*}
  \mathrm{End}_{\Lambda}(S_i^{m(i)})\cong M_{m(i)}(D_i)\cong D_i\otimes M_{m(i)},
\end{equation*}
where $D_i$ is the division ring
\begin{equation*}
  D_i:=\mathrm{End}_{\Lambda}(S_i)
\end{equation*}
and $M_r$ denotes the $r\times r$ matrix algebra over $\Bbbk$.

In particular, \Cref{eq:ee0} is itself Frobenius (indeed, semisimple), so it makes sense to talk about {\it its} Frobenius elements, counits, pairs, etc.

\begin{theorem}\label{th:topp}
  Let $(e_{ij})$ be a canonical decomposition for a Frobenius algebra $\Lambda$ and $\nu$ the Nakayama permutation of $A$.

  Every Frobenius pair $(\varepsilon,y)$ of $\Lambda$ satisfies the following conditions.
  \begin{enumerate}[(a)]
  \item\label{item:3} $\varepsilon$ annihilates $e_j \Lambda e_i$ for $j\ne \nu^{-1}i$, and for each $i$ its restriction to the small space
    \begin{equation*}
      (e_{\nu^{-1}i}\Lambda e_i)^0\subseteq e_{\nu^{-1}i}\Lambda e_i
    \end{equation*}
    is a Frobenius counit for that semisimple ring.
  \item\label{item:4} The element $y\in \Lambda\otimes \Lambda$ belongs to
    \begin{equation*}
      \bigoplus_{i,j}\Lambda_{j\leftarrow i}\otimes \Lambda_{\nu^{-1}i\leftarrow j}. 
    \end{equation*}
  \end{enumerate}
\end{theorem}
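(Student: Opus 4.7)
My strategy rests on three ingredients: (i) the right $\Lambda$-module isomorphism $\Lambda \xrightarrow{\sim} \Lambda^*$ induced by $\varepsilon$, combined with Krull--Schmidt, for the first part of (a); (ii) a dimension count coupled with a non-degeneracy transfer, for the second part of (a); and (iii) the counit axiom fed through (a), for (b).

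For the first claim of (a), the assignment $a \mapsto \varepsilon(a\cdot)$ defines a right $\Lambda$-module isomorphism $\Lambda \xrightarrow{\sim} \Lambda^*$. In the decompositions $\Lambda = \bigoplus_{i,j} e_{ij}\Lambda$ and $\Lambda^* = \bigoplus_{i,j}(\Lambda e_{ij})^*$, \Cref{eq:lr} gives $(\Lambda e_{ij})^* \cong P_{\nu^{-1}(i)}$, so the $P_i$-isotypic component of $\Lambda^*$ is $\bigoplus_{j'}(\Lambda e_{\nu(i),j'})^*$. By Krull--Schmidt, the isomorphism must send each summand $e_{ij}\Lambda \cong P_i$ into this isotypic component, which unpacks to $\varepsilon(e_{ij}\Lambda e_{kl}) = 0$ whenever $k \ne \nu(i)$. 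Summing over $j$ and $l$ and relabelling yields the claim.

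For the second claim of (a), I first identify $(e_{\nu^{-1}(i)}\Lambda e_i)^0 = \soc_R(e_{\nu^{-1}(i)}\Lambda)$: left multiplication by $\varphi \in e_{\nu^{-1}(i)}\Lambda e_i$ is a small map iff $\varphi\Lambda$ is semisimple, iff $\varphi\cdot\rad(\Lambda) = 0$. Because $\soc\cdot\rad = 0$, the bilinear pairing $(\varphi,\psi) \mapsto \varepsilon(\varphi\psi)$ on $(e_{\nu^{-1}(i)}\Lambda e_i)^0 \times e_i\Lambda e_i$ descends to one on $(e_{\nu^{-1}(i)}\Lambda e_i)^0 \times e_i\ol\Lambda e_i$ with $\ol\Lambda := \Lambda/\rad(\Lambda)$. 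Both spaces have $\Bbbk$-dimension $m(i)^2\dim_\Bbbk D_i$ (using the Frobenius equality $m(i) = m(\nu^{-1}(i))$ and $\dim_{D_i} S_i = m(i)$), and non-degeneracy on one side follows from the first part of (a) together with the non-degeneracy of the global form $\varepsilon(xy)$; equal dimensions then give non-degeneracy on both sides. There is a canonical algebra isomorphism $\gamma: (e_{\nu^{-1}(i)}\Lambda e_i)^0 \xrightarrow{\sim} e_i\ol\Lambda e_i = M_{m(i)}(D_i)$ obtained by identifying both sides with $\End_\Lambda(S_i^{m(i)})$ via the natural maps induced by the factorization $M_i \twoheadrightarrow S_i^{m(i)} \to S_i^{m(i)} \hookrightarrow M_{\nu^{-1}(i)}$; tracking the definitions shows that $\Lambda$-right-multiplication by $\psi \in e_i\Lambda e_i$ on the small space corresponds, under $\gamma$, to $\ol\Lambda$-right-multiplication by the reduction $\ol\psi$. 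Writing $\ol\varepsilon := \varepsilon\circ\gamma^{-1}$, the descended pairing thus becomes $(\ol\varphi,\ol\psi) \mapsto \ol\varepsilon(\ol\varphi\,\ol\psi)$ on $e_i\ol\Lambda e_i$; its non-degeneracy says exactly that $\ol\varepsilon$, equivalently $\varepsilon|_{(e_{\nu^{-1}(i)}\Lambda e_i)^0}$, is a Frobenius counit for $M_{m(i)}(D_i)$.

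For (b), write $y = \sum_k a_k \otimes b_k$ and use the counit axiom $\sum_k a_k\varepsilon(b_k a) = a$. For $a \in e_p\Lambda e_q$, the first part of (a) forces $\varepsilon(b_k a) \ne 0$ to imply $b_k a \in e_r\Lambda e_q$ with $q = \nu(r)$, hence $b_k \in e_{\nu^{-1}(q)}\Lambda e_p$; the matching $a_k$ must lie in $e_p\Lambda e_q$ for the sum to recover $a$. Projecting the identity onto the various $(p,q)$-blocks then yields $y \in \bigoplus_{p,q}(e_p\Lambda e_q)\otimes (e_{\nu^{-1}(q)}\Lambda e_p)$, which is exactly $\bigoplus_{i,j}\Lambda_{j\leftarrow i}\otimes \Lambda_{\nu^{-1}(i)\leftarrow j}$ after the relabelling $(p,q) = (j,i)$. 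The main obstacle is the intertwining property $\gamma(\varphi\psi) = \gamma(\varphi)\ol\psi$ in the second part of (a); once this compatibility is in hand, the remainder reduces to bookkeeping with dimensions and non-degeneracy.
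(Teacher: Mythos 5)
Your route to the first claim of (a) is genuinely different from the paper's (which constructs a single functional supported as in (a), proves its non-degeneracy directly, and then attempts to transfer the property to all Frobenius counits), but it founders on the Krull--Schmidt step. Krull--Schmidt gives uniqueness of a decomposition into indecomposables up to isomorphism and permutation of summands; it does \emph{not} say that a given isomorphism $\Lambda\xrightarrow{\ \sim\ }\Lambda^*$ carries each summand $e_{ij}\Lambda\cong P_i$ into the $P_i$-isotypic component of the target. Such a map only needs to be invertible modulo the radical of the endomorphism ring, so it can have nonzero ``off-isotypic'' components in $\Hom_{\Lambda}(P_i,P_k)$ for $k\ne i$. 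Concretely, take $\Lambda=\Bbbk Q/J^2$ with $Q$ the cyclic quiver on two vertices, arrows $\alpha=e_2\alpha e_1$ and $\beta=e_1\beta e_2$; here $\nu$ is the transposition. The functional $\varepsilon$ with $\varepsilon(e_1)=\varepsilon(e_2)=0$ and $\varepsilon(\alpha)=\varepsilon(\beta)=1$ is a Frobenius counit satisfying (a); but then so is $\varepsilon'=\varepsilon\bigl(\,\cdot\,(1+\alpha)\bigr)$, since $1+\alpha$ is a unit, and $\varepsilon'(e_2)=\varepsilon(e_2)+\varepsilon(\alpha)=1\ne 0$ even though $2\ne\nu^{-1}(2)$. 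So the step is not merely unjustified: the universally quantified claim you are deriving from it fails, and with it the first claim of (a), the non-degeneracy transfer in your second claim of (a), and your deduction of (b) (all of which take the first claim as input).

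For what it is worth, the paper's own argument for the ``every Frobenius pair'' clause (its step (3)) rests on the parallel assertion that a left-module automorphism of $\bigoplus_i\Lambda e_i$ is a sum of automorphisms of the individual $\Lambda e_i$, which the same element $1+\alpha$ refutes; so the quantifier in the statement is too strong as written. What survives, and what the rest of the paper actually uses (via \Cref{eq:yspec} and \Cref{th:ncounit-bis}), is the \emph{existence} of a Frobenius pair satisfying (a) and (b): the paper obtains this by prescribing a functional supported only on the blocks $e_{\nu^{-1}i}\Lambda e_i$, restricting to a Frobenius counit on each small space, and checking non-degeneracy of the induced pairing by composing an arbitrary nonzero $a$ down to a maximally degenerate morphism not killed by $\varepsilon$. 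Your proposal contains no such existence argument, so even after weakening the statement to an existential one you would still need to supply that construction (your dimension count and the identification of the small space with $\End_{\Lambda}(S_i^{m(i)})$ would be reusable ingredients there).
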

\begin{proof}
  We write $\Delta$ for the comultiplication induced by $x$, as in \Cref{eq:delind}. The proof consists of a number of separate observations.

  {\bf (1): Condition \Cref{item:4} follows from \Cref{item:3}.} Writing $y=y_1\otimes y_2$ in standard Sweedler notation (\cite[Notation 1.4.2]{mntg}), we have
  \begin{equation*}
    a = (\id\otimes\varepsilon)\Delta a = y_1\varepsilon(y_2a)
  \end{equation*}
  and similarly
  \begin{equation*}
    a = (\varepsilon\otimes\id)\Delta a = \varepsilon(ay_1)y_2.
  \end{equation*}
  This means that $y=y_1\otimes y_2$ is the dual-basis tensor
  \begin{equation*}
    \sum_\alpha e_\alpha\otimes e_{\alpha}^*
  \end{equation*}
  given by the non-degenerate pairing
  \begin{equation}\label{eq:frobpair}
    \Lambda \times \Lambda  \ni (a,b)\mapsto \varepsilon(ab)\in \Bbbk,
  \end{equation}
  i.e. $(e_{\alpha})_{\alpha}$ and $(e_{\alpha}^*)_{\alpha}$ are any two dual bases with respect to that pairing:
  \begin{equation*}
    \varepsilon(e^*_{\beta}e_{\alpha}) = \delta_{\beta,\alpha}. 
  \end{equation*}
  But then, choosing the $\Lambda$-basis $(e_\alpha)$ compatible with the decomposition
  \begin{equation*}
    \Lambda =\bigoplus_{i,j} \Lambda _{j\leftarrow i},
  \end{equation*}
  the condition imposed on $\varepsilon$ in \Cref{item:3} delivers \Cref{item:4}. 

  {\bf (2): constructing $(\varepsilon,y)$ meeting condition \Cref{item:3}. } Let
  \begin{equation*}
    \varepsilon:\Lambda\cong \mathrm{End}_{\Lambda}(P_1^{m(1)}\oplus\cdots\oplus P_n^{m(n)})\to \Bbbk
  \end{equation*}
  be {\it any} linear map satisfying \Cref{item:3}; we will then show that the resulting pairing \Cref{eq:frobpair} is non-degenerate; and as $\Lambda$ is finite-dimensional, it will be enough to prove {\it right} non-degeneracy:
  \begin{equation*}
    \varepsilon(-\cdot a)\equiv 0\Longrightarrow a=0. 
  \end{equation*}
  Consider, then, a non-zero element
  \begin{equation*}
    a\in \Lambda\cong \mathrm{End}_{\Lambda}(P_1^{m(1)}\oplus\cdots\oplus P_n^{m(n)}). 
  \end{equation*}
  It must map {\it some} $P_i$ onto some non-zero submodule of $P_j$. But then one sees easily that the (non-zero) restriction (and corestriction)
  \begin{equation*}
    a|_{P_i}:P_i\to P_j
  \end{equation*}
  can be composed further with some morphism $b:P_j\to P_{\nu^{-1}i}$ so as to produce a maximally degenerate morphism
  \begin{equation}\label{eq:mymdeg}
    P_i^{m(i)}\twoheadrightarrow P_i\to P_{\nu^{-1}i}\hookrightarrow P_{\nu^{-1}i}^{m(i)}.
  \end{equation}
  By assumption $\varepsilon$ restricted to
  \begin{equation*}
    \mathrm{Hom}^0_{\Lambda}(P_i^{m(i)},\ P_{\nu^{-1}i}^{m(i)}) = \left(e_{\nu^{-1}i}\Lambda e_i\right)^0
  \end{equation*}
  is a Frobenius counit, so it will fail to annihilate some morphism with the same kernel as \Cref{eq:mymdeg}. But then we can assume \Cref{eq:mymdeg} is not annihilated by $\varepsilon$ (by further composing it with an automorphism of $P_{\nu^{-1}i}^{m(i)}$ if necessary).

  Extending $b$ to all of $\Lambda$ by annihilating $P_{j'}$, $j'\ne j$, this gives a morphism $ba$ that decomposes as the sum of
  \begin{itemize}[wide]
  \item the term \Cref{eq:mymdeg}, not annihilated by $\varepsilon$;
  \item with other terms in various $\Lambda_{\ell\leftarrow k}$ with $\ell\ne \nu^{-1}k$. 
  \end{itemize}
  By choice, $\varepsilon$ annihilates these latter but not the single former term, so $\varepsilon(ba)\ne 0$. This finishes the proof of (right) non-degeneracy.  

  {\bf (3): All Frobenius pairs satisfy \Cref{item:3}.} On the one hand, we already know at least one does (since we have constructed one already). On the other, the Frobenius counits $\varepsilon\in \Lambda^*$ are precisely the functionals that induce non-degenerate bilinear forms via \Cref{eq:frobpair}, and hence right-module isomorphisms
  \begin{equation*}
    \Lambda_{\Lambda}\cong ({}_{\Lambda}\Lambda)^*
  \end{equation*}
  (with the subscripts indicating the sidedness of the module structure under consideration). This means in particular that any two such counits ($\varepsilon$ and $\varepsilon'$, say) are related by composing with a left-module automorphism of $\Lambda$, i.e. by right-multiplying by an invertible element therein:
  \begin{equation}\label{eq:ee}
    \varepsilon'(a) = \varepsilon(ab),\ \forall a\in \Lambda
  \end{equation}
  for some invertible $b\in \Lambda$. Right multiplication by $b$ is a left-module automorphism of
  \begin{equation*}
    \Lambda = \bigoplus_{i=1}^n \Lambda e_i,
  \end{equation*}
  so a sum of $n$ automorphisms, one of each $\Lambda e_i$. Since
  \begin{equation*}
    {}_{\Lambda}\mathrm{End}(\Lambda e_i) = e_i\Lambda e_i
  \end{equation*}
  we have
  \begin{equation*}
    b\in \bigoplus_{i=1}^n e_i\Lambda e_i.
  \end{equation*}
  It follows that right $b$-multiplication preserves every summand $e_i\Lambda e_j$ of $\Lambda$, so by \Cref{eq:ee}, if $\varepsilon$ satisfies condition \Cref{item:3} so does $\varepsilon'$.

  This concludes the proof.
\end{proof}

The desired generalization of \Cref{th:ncounit}, whose statement invokes \Cref{th:topp} implicitly (applied to the basic Frobenius algebra \Cref{eq:lbd}), reads as follows.

\begin{theorem}\label{th:ncounit-bis}
  Consider a self-injective algebra \Cref{eq:amis} where \Cref{eq:lbd} is basic Frobenius. Denote by
  \begin{equation*}
    \Lambda\otimes\Lambda\ni y
    =
    \sum_{i,j}\sum_{\bullet} \varphi_{\bullet, j\leftarrow i}\otimes \psi_{\bullet, \nu^{-1}(i)\leftarrow j}
  \end{equation*}
  an element inducing a coassociative counital comultiplication on $\Lambda$, and fix, for every $1\le i\le n$, a set
  \begin{equation*}
    \cS(i)\subseteq \{1,\cdots,m(i)\}\times  \{1,\cdots,m(\nu^{-1}i)\}.
  \end{equation*}
  The element
  \begin{equation}\label{eq:xaa}
    A\otimes A\ni x :=
    \sum_{i,j} \sum_{t=1}^{m(j)} \sum_{\bullet}
    \sum_{(s,s')\in \cS(i)}
    \varphi^{t\leftarrow s}_{\bullet, j\leftarrow i}\otimes \psi^{s'\leftarrow t}_{\bullet, \nu^{-1}(i)\leftarrow j}
  \end{equation}
  is then invariant in the $A$-bimodule $A\otimes A$, and hence induces a coassociative bimodule comultiplication on $A$.
\end{theorem}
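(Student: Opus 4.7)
I would follow the two-step structure of the proof of Theorem~\ref{th:ncounit}. Coassociativity of the induced $\Delta$ is automatic via Lemma~\ref{le:autoassoc}, so the whole task reduces to verifying $ax = xa$ for all $a \in A$. Since $A$ is generated as an algebra by its non-unital subalgebra $\Lambda^{1 \leftarrow 1}$ together with the matrix units $\id_j^{t \leftarrow s}$ --- a fact witnessed by the factorization $\varphi^{t \leftarrow s}_{\bullet, j \leftarrow i} = \id_j^{t \leftarrow 1} \cdot \varphi^{1 \leftarrow 1}_{\bullet, j \leftarrow i} \cdot \id_i^{1 \leftarrow s}$ --- it is enough to check invariance against these two families.

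For the matrix units, I would compute both products directly using the composition rules
\[
\id_{j_0}^{t_0 \leftarrow s_0} \cdot \varphi^{t \leftarrow s}_{\bullet, j \leftarrow i} = \delta_{j, j_0}\,\delta_{t, s_0}\, \varphi^{t_0 \leftarrow s}_{\bullet, j \leftarrow i}, \qquad \psi^{s' \leftarrow t}_{\bullet, k \leftarrow j} \cdot \id_{j_0}^{t_0 \leftarrow s_0} = \delta_{j, j_0}\,\delta_{t, t_0}\,\psi^{s' \leftarrow s_0}_{\bullet, k \leftarrow j_0}.
\]
Both $\id_{j_0}^{t_0 \leftarrow s_0}\, x$ and $x\, \id_{j_0}^{t_0 \leftarrow s_0}$ collapse --- via two different Kronecker deltas --- to the single expression
\[
\sum_{i,\,\bullet,\,(s, s') \in \cS(i)} \varphi^{t_0 \leftarrow s}_{\bullet, j_0 \leftarrow i} \otimes \psi^{s' \leftarrow s_0}_{\bullet, \nu^{-1}(i) \leftarrow j_0},
\]
independently of the chosen $\cS(i)$'s.

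For $\Lambda^{1 \leftarrow 1}$, the plan is to decompose $x = \sum_i \sum_{(s, s') \in \cS(i)} x^{(s, s', i)}$ with
\[
x^{(s, s', i)} := \sum_{j, t, \bullet} \varphi^{t \leftarrow s}_{\bullet, j \leftarrow i} \otimes \psi^{s' \leftarrow t}_{\bullet, \nu^{-1}(i) \leftarrow j},
\]
and to recognize each $x^{(s, s', i)}$ as the ``spread'' of the $i$-slice $y_i := \sum_{j, \bullet} \varphi_{\bullet, j \leftarrow i} \otimes \psi_{\bullet, \nu^{-1}(i) \leftarrow j}$ of $y$. The key observation is that the bimodule invariance $ay = ya$ in $\Lambda \otimes \Lambda$ automatically refines to the slice-wise invariance $a y_i = y_i a$: by the constrained form of $y$ provided by Theorem~\ref{th:topp}(b), every nonzero term of both $ay$ and $ya$ has its first tensor factor in the column $\bigoplus_\ell \Lambda_{\ell \leftarrow i}$ determined by a single $i$, so the full equality projects summand-by-summand in $i$. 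A short composition analysis --- in the spirit of the matrix-unit case --- then shows that only the $t = 1$ contributions survive in $a^{1 \leftarrow 1} x^{(s, s', i)}$ and in $x^{(s, s', i)} a^{1 \leftarrow 1}$, and each side equals the image of $a y_i$, respectively $y_i a$, under the linear ``spread'' $\varphi \otimes \psi \mapsto \varphi^{1 \leftarrow s} \otimes \psi^{s' \leftarrow 1}$; the two images therefore coincide.

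The hard part is purely notational bookkeeping --- tracking the four index families $(i, j, s, t)$ and the Kronecker constraints they impose on multiplication. Conceptually nothing new happens beyond the proof of Theorem~\ref{th:ncounit}: the slice-wise invariance of $y$ (forced by Theorem~\ref{th:topp}(b)) means each $(s, s') \in \cS(i)$ contributes an independent spread of an already invariant piece, and the resulting $x$ is $A$-bimodule-invariant as required.
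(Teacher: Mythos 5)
Your proposal is correct and follows essentially the same route as the paper's proof: in both, the key mechanism is that multiplication by elements of $A$ intertwines, via the ``spread'' maps $(\bullet)^{t\leftarrow s}\otimes(\bullet)^{s'\leftarrow u}$, with multiplication on the $i$-slices of $y$, so that the $\Lambda$-invariance $ay=ya$ transports slice-by-slice to the $A$-invariance $ax=xa$. The only difference is organizational: you first reduce to the generators $\Lambda^{1\leftarrow 1}$ and the matrix units $\id_j^{t\leftarrow s}$ (mirroring the proof of \Cref{th:ncounit}) and treat the two families separately, whereas the paper checks invariance against a general spanning element $a^{t\leftarrow u}$, $a\in\Lambda_{j\leftarrow k}$, in a single computation.
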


Before proceeding to the proof, we observe how this recovers the various other constructions discussed above.

\begin{example}
  Setting all $\cS(i)$ to the singleton $\{(1,1)\}$ produces the comultiplication described in the proof of \Cref{th:ncounit}. 
\end{example}

\begin{example}\label{ex:diag}
  On the other hand, setting
  \begin{equation*}
    \cS(i)=
    \begin{cases}
      \text{all of }\{1,\cdots,m(i)\}\times  \{1,\cdots,m(\nu^{-1}i)\}&\ \text{if }m(i)\ne m(\nu^{-1}(i))\\
      \text{the diagonal of }\{1,\cdots,m(i)\} &\ \text{otherwise}
    \end{cases}
  \end{equation*}
  recovers the comultiplication of \cite[Theorem 2.14]{MR4834012} for that paper's NSY algebras.
\end{example}

\pf{th:ncounit-bis}
\begin{th:ncounit-bis}
  It will be enough to show that $x$ is invariant under multiplication by any element of $A^{t\leftarrow u}_{j\leftarrow k}$ for fixed
  \begin{equation*}
    1\le j,k\le n,\quad 1\le t\le m(j)\quad \text{and}\quad 1\le u\le m(k);
  \end{equation*}
  or:
  \begin{equation*}
    a^{t\leftarrow u}x=xa^{t\leftarrow u},\ \forall a\in \Lambda_{j\leftarrow k}\cong A^{1\leftarrow 1}_{j\leftarrow k}.
  \end{equation*}  
  To check this, in turn, it will suffice to equate, for each $i$, what we will call the {\it $i$-components} of $a^{t\leftarrow u}x$ and $xa^{t\leftarrow u}$: the partial sums thereof collecting the terms lying in
  \begin{equation*}
    \bigoplus A_{\bullet\leftarrow i}^{\bullet\leftarrow \bullet}\otimes A_{\nu^{-1}i\leftarrow \bullet}^{\bullet\leftarrow \bullet}
  \end{equation*}
  (with summation over all choices of placeholder).

  We regard $y\in \Lambda\otimes\Lambda$ as an element
  \begin{equation*}
    \sum_{i,j}\sum_{\bullet} \varphi^{1\leftarrow 1}_{\bullet, j\leftarrow i}\otimes \psi^{1\leftarrow 1}_{\bullet, \nu^{-1}(i)\leftarrow j} \in A\otimes A
  \end{equation*}
  via the usual identification
  \begin{equation*}
    \Lambda\cong \bigoplus_{i,j}A_{j\leftarrow i}^{1\leftarrow 1}.
  \end{equation*}
  Now note that the $i$-component of $xa^{t\leftarrow u}$ can be obtained by applying the morphism
  \begin{equation}\label{eq:spread}
    \sum_{(s,s')\in \cS(i)}(\bullet)^{t\leftarrow s}\otimes(\bullet)^{s'\leftarrow u}:
    \bigoplus_j A^{1\leftarrow 1}_{j\leftarrow i}\otimes A^{1\leftarrow 1}_{\nu^{-1}i\leftarrow j}
    \to
    \bigoplus_{(s,s')\in \cS(i)} \bigoplus_j A^{t\leftarrow s}_{j\leftarrow i}\otimes A^{s'\leftarrow u}_{\nu^{-1}i\leftarrow j}
  \end{equation}
  to the $i$-component of $ya$. Similarly, the $i$-component of $a^{t\leftarrow u}x$ is obtained from that of $ay$ by applying \Cref{eq:spread}. Since $ay=ya$ by assumption, we are done. 
\end{th:ncounit-bis}

The following result highlights the importance of the choice of $\cS(i)$ in \Cref{ex:diag}.

\begin{proposition}\label{pr:whencounit}
  Under the hypotheses of \Cref{th:ncounit-bis}, the comultiplication
  \begin{equation*}
    A\ni a\mapsto ax = xa\in A\otimes A
  \end{equation*}
  induced by the element $x$ of \Cref{eq:xaa} is counital if and only if for all $i$, the set $\cS(i)$ is the graph of a bijection.
\end{proposition}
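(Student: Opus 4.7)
The strategy is to verify the two implications separately, using the fact that $\Delta(a)=ax$ is an $A$-bimodule map to reduce both counit axioms to the single pair of identities $(\varepsilon_A\otimes\id)(x)=1_A$ and $(\id\otimes\varepsilon_A)(x)=1_A$; I will analyse these block-by-block through the decomposition $A=\bigoplus A^{t\leftarrow s}_{j\leftarrow i}$.

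For the ``if'' direction, given bijections $\sigma_i\colon\{1,\ldots,m(i)\}\to\{1,\ldots,m(\nu^{-1}i)\}$ with $\cS(i)=\{(s,\sigma_i(s))\}$, declare
\[
\varepsilon_A\bigl(\alpha^{t\leftarrow s}_{l\leftarrow k}\bigr):=\delta_{l,\,\nu^{-1}k}\,\delta_{t,\,\sigma_k(s)}\,\varepsilon_\Lambda(\alpha),\qquad\alpha\in\Lambda_{l\leftarrow k},
\]
extended linearly. Plugging into $(\varepsilon_A\otimes\id)(x)$, the Kronecker deltas restrict the sum to terms with $j=\nu^{-1}i$ and $t=\sigma_i(s)$; since $(s,s')\in\cS(i)$ forces $s'=\sigma_i(s)$ as well, the surviving $\bullet$-sum collapses through the $\Lambda$-Frobenius identity $\sum_\bullet\varepsilon_\Lambda(\varphi_{\bullet,\nu^{-1}i\leftarrow i})\psi_{\bullet,\nu^{-1}i\leftarrow\nu^{-1}i}=\mathrm{id}_{P_{\nu^{-1}i}}$, producing $\sum_{i,s}(\mathrm{id}_{P_{\nu^{-1}i}})^{\sigma_i(s)\leftarrow\sigma_i(s)}$. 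Bijectivity of each $\sigma_i$ ensures every primitive idempotent $\mathrm{id}_{P_{k,r}}$ appears exactly once in this sum, so it equals $1_A$. The companion identity $(\id\otimes\varepsilon_A)(x)=1_A$ is verified symmetrically, with $\varphi$ and $\psi$ swapping roles.

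For the ``only if'' direction, suppose $\varepsilon_A$ is a counit, so $(A,\Delta,\varepsilon_A)$ is Frobenius; its Nakayama permutation coincides with $\nu$ via the Morita equivalence between $A$ and $\Lambda$ afforded by the progenerator $M$. Applying \Cref{th:topp} to this Frobenius pair of $A$: the counit $\varepsilon_A$ annihilates $e'_jAe'_i$ for $j\neq\nu^{-1}i$ (which among other things forces $m(i)=m(\nu^{-1}i)$, so $\cS(i)$ sits in a square grid), and its restriction to the small-morphism space $(e'_{\nu^{-1}i}Ae'_i)^0\cong M_{m(i)}(D_i)$ is a Frobenius counit for that matrix algebra. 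Choose $v\in\Lambda_{\nu^{-1}i\leftarrow i}$ with $\varepsilon_\Lambda(v)=1$ and set $C_i(s,t):=\varepsilon_A(v^{t\leftarrow s}_{\nu^{-1}i\leftarrow i})$. Then tracking the two counit identities block by block and evaluating the resulting functionals on $\Lambda_{\nu^{-1}i\leftarrow i}$ at $v$ produces the scalar matrix equations $S_i^TC_i=I_{m(i)}=C_iS_i^T$, with $S_i\in\{0,1\}^{m(i)\times m(i)}$ the indicator matrix of $\cS(i)$; in particular $S_i$ is invertible.

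The hard step is upgrading ``invertible $0$-$1$ matrix'' to ``permutation matrix.'' I expect this to follow by combining the rigidity that $\varepsilon_A|_{M_{m(i)}(D_i)}$ is a genuine Frobenius counit --- necessarily of the form $\mathrm{tr}(G_i\cdot)$ for invertible $G_i\in M_{m(i)}(D_i)$ --- with additional block-decoupling constraints obtained by testing $(\varepsilon_A\otimes\id)(x)=1_A$ against basis elements of $\Lambda_{\nu^{-1}i\leftarrow i}$ transverse to the line $\Bbbk\cdot\varepsilon_\Lambda$ in its dual; together these should pin the support of $S_i$ down to a permutation pattern, concluding the proof.
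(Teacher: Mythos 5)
Your ($\Leftarrow$) direction is sound and is essentially the paper's own argument: you build $\varepsilon_A$ by transporting $\varepsilon_\Lambda$ to the blocks $A^{\sigma_i(s)\leftarrow s}_{\nu^{-1}i\leftarrow i}$ singled out by the graphs $\cS(i)$ and killing everything else, and the two identities $(\varepsilon_A\otimes\id)x=1_A=(\id\otimes\varepsilon_A)x$ then collapse onto the corresponding identities for $(\varepsilon_\Lambda,y)$, with bijectivity of $\sigma_i$ guaranteeing that each diagonal block of $1_A$ is hit exactly once. That half needs no changes.

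The ($\Rightarrow$) direction, however, has a genuine gap, and you flag it yourself: an invertible $0$--$1$ matrix need not be a permutation matrix (e.g.\ the upper unitriangular matrix of all ones on and above the diagonal), so the conclusion ``$S_i$ invertible'' is strictly weaker than what the proposition asserts, and the paragraph beginning ``I expect this to follow'' is a hope, not a proof. There is also a problem upstream of that: your matrix $C_i(s,t)=\varepsilon_A(v^{t\leftarrow s})$ records only the value of $\varepsilon_A$ at the single element $v$ of each block $A^{t\leftarrow s}_{\nu^{-1}i\leftarrow i}$, whereas the counit identities involve the sums $\sum_{\bullet}\varphi_{\bullet}\,\varepsilon_A(\psi_{\bullet}^{\,\cdot\leftarrow\cdot})$, which depend on the restriction of $\varepsilon_A$ to the whole block (\Cref{th:topp} constrains $\varepsilon_A$ only on the \emph{small} part of $e_{\nu^{-1}i}Ae_i$, not on all of it), so the clean equations $S_i^{T}C_i=I=C_iS_i^{T}$ do not follow as stated. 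The paper avoids matrices altogether and reads everything off the block support of $x$: since the middle index $t$ in \Cref{eq:xaa} runs over \emph{all} of $\{1,\dots,m(j)\}$ while the outer copy indices of the two tensorands run over exactly the two projections of $\cS(i)$, the fact that $(\varepsilon_A\otimes\id)x=1_A$ (resp.\ $(\id\otimes\varepsilon_A)x=1_A$) has a non-zero component in \emph{every} diagonal block $A^{t\leftarrow t}_{j\leftarrow j}$ forces the right (resp.\ left) projection of each $\cS(i)$ to be surjective; and if $(s,t),(s,t')\in\cS(i)$ with $t\neq t'$, the relabeling isomorphism $A^{t\leftarrow t}_{j\leftarrow j}\cong A^{t'\leftarrow t}_{j\leftarrow j}$ carries a non-zero diagonal component of $(\varepsilon_A\otimes\id)x$ to a non-zero \emph{off-diagonal} one, contradicting the diagonality of $1_A$. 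Together with $m(i)=m(\nu^{-1}i)$ (forced because a counital $\Delta$ makes $A$ Frobenius), this pins $\cS(i)$ down to the graph of a bijection. You would need to replace your matrix argument with something of this kind --- or actually carry out the ``pinning down'' you defer --- before the converse is established.
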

\begin{proof}
  We fix a Frobenius pair $(\varepsilon,y)$ for $\Lambda$ throughout, and prove the two implications separately.

  {\bf ($\Leftarrow$)} The assumption is that each $\cS(i)$ is the graph of a bijection
  \begin{equation*}
    \varphi_i:\{1,\cdots,m(i)\}\to \{1,\cdots,m(\nu^{-1}i)\},
  \end{equation*}
  so in particular $m(i)=m(\nu^{-1}i)$ for all $i$ (precisely the necessary and sufficient condition that ensures $A$ is Frobenius \cite[\S IV.6, Theorem 6.2]{sy-bk1}). In short:
  \begin{equation*}
    \cS(i) = \{(s,\varphi(s))\ |\ 1\le s\le m(i)\},\ \forall 1\le i\le n.
  \end{equation*}
  We proceed to construct a counit $\varepsilon_A$ for $A$ (rendering $\Delta$ counital), using the original counit $\varepsilon\in\Lambda^*$ fixed above. As we already know that $x\in A\otimes A$ is invariant (i.e. $ax=xa$), all we need is to ensure that
  \begin{equation}\label{eq:allid}
    (\varepsilon_A\otimes\id)x = 1_A = (\id\otimes\varepsilon_A)x. 
  \end{equation}
  To achieve this, define $\varepsilon_A$ to be
  \begin{itemize}[wide]
  \item on each summand $A^{\varphi_i^{-1}t\leftarrow t}_{\nu^{-1}i\leftarrow i}$, the composition
    \begin{equation*}
      A^{\varphi_i^{-1}t\leftarrow t}_{\nu^{-1}i\leftarrow i}      
      \xrightarrow{\quad \left(\bullet^{\varphi_i^{-1}t\leftarrow t}\right)^{-1}\quad}
      \Lambda_{\nu^{-1}i\leftarrow i}
      \xrightarrow{\quad\varepsilon\quad}
      \Bbbk;      
    \end{equation*}
  \item and zero on other summands $A^{t\leftarrow s}_{j\leftarrow i}$.
  \end{itemize}
  Applying $(1\otimes\varepsilon_A)$ to $x$ will then have the effect of annihilating
  \begin{itemize}[wide]
  \item all right-hand tensorands of \Cref{eq:xaa} {\it not} in $A^{\varphi_i^{-1}t\leftarrow t}_{\nu^{-1}i\leftarrow i}$;
  \item and hence all terms therein whose left-hand tensorand is {\it not} in $A^{t\leftarrow t}_{i\leftarrow i}$.
  \end{itemize}
  This means that $(\id\otimes\varepsilon_A)x$ is obtained from
  \begin{equation*}
    (\id\otimes\varepsilon)y = 1_{\Lambda} = \sum_i \id_i \in \bigoplus_i \Lambda_{i\leftarrow i}
  \end{equation*}
  by applying
  \begin{equation*}
    \sum_{t=1}^{m(i)}(\bullet)^{t\leftarrow t} : \Lambda_{i\leftarrow i}\to \bigoplus A^{t\leftarrow t}_{i\leftarrow i},
  \end{equation*}
  respectively, to each $\Lambda_{i\leftarrow i}$ component $\id_i$. But this means precisely that
  \begin{equation*}
    (\id\otimes\varepsilon_A)x = \sum_i\sum_{t=1}^{m(i)}\id_i^{t\leftarrow t} = 1_{A},
  \end{equation*}
  and we are done with the right-hand half of \Cref{eq:allid}. The left-hand half is entirely analogous, so we omit the parallel argument.

  {\bf ($\Rightarrow$)} The existence of a counital comultiplication (coassociative and a bimodule morphism) means precisely that $A$ is Frobenius, which in turn means \cite[\S IV.6, Theorem 6.2]{sy-bk1} that
  \begin{equation*}
    m(i) = m(\nu^{-1}i),\ \forall 1\le i\le n.
  \end{equation*}
  It remains to argue that for every $i$, the restrictions
  \begin{equation}\label{eq:2proj}
    \begin{tikzpicture}[auto,baseline=(current  bounding  box.center)]
      \path[anchor=base] 
      (0,0) node (l) {$\cS(i)$}
      +(3,0) node (m) {$\{1,\cdots,m(i)\}^2$}
      +(8,0) node (r) {$\{1,\cdots,m(i)\}$}
      ;
      \draw[right hook->] (l) to[bend left=0] node[pos=.5,auto] {$\scriptstyle $} (m);
      \draw[->] (m) to[bend left=6] node[pos=.5,auto] {$\scriptstyle \text{left projection}$} (r);
      \draw[->] (m) to[bend right=6] node[pos=.5,auto,swap] {$\scriptstyle \text{right projection}$} (r);
    \end{tikzpicture}
  \end{equation}
  of the two Cartesian projections are both bijective.
 
  The counitality assumption means that there is some $\varepsilon\in A^*$ with
  \begin{equation*}
    (\varepsilon\otimes\id)x = 1_A = (\id\otimes\varepsilon)x.
  \end{equation*}
  This means, in particular, that an application of $\varepsilon$ to the right-hand side of \Cref{eq:xaa} produces the identity $1_A$. Since the latter decomposes as a sum of (non-zero) components in $A^{t\leftarrow t}_{i\leftarrow i}$ for
  \begin{equation*}
    1\le i\le n,\ 1\le t\le m(i),
  \end{equation*}
  it follows that the top composition in \Cref{eq:2proj} is surjective; so is the bottom composition, by a parallel argument.

  We next turn to proving the injectivity, say, of the bottom path in \Cref{eq:2proj} (again, the other case is analogous). Because \Cref{eq:xaa} simply sums, for each $i$, over all $(s,s')\in \cS(i)$, for
  \begin{equation*}
    (s,t)\ \text{and}\ (s,t')\in \cS(i)
  \end{equation*}
  and any $1\le j\le n$ the $A^{t\leftarrow t}_{j\leftarrow j}$ and $A^{t'\leftarrow t}_{j\leftarrow j}$-components of $(\varepsilon\otimes\id)x$ are mutual images through
  \begin{equation*}
    A^{t\leftarrow t}_{j\leftarrow j}
    \xrightarrow{\quad \left(\bullet^{t\leftarrow t}\right)^{-1}\quad}
    \Lambda_{j\leftarrow j}
    \xrightarrow{\quad \bullet^{t'\leftarrow t}\quad}
    A^{t'\leftarrow t}_{j\leftarrow j}
  \end{equation*}
  and its inverse. Since the former component is non-zero so is the latter, but this cannot be unless $t'=t$. This proves the desired injectivity of
  \begin{equation*}
    \cS(i=\nu j)\ni (s,t)\mapsto s,
  \end{equation*}
  finishing the proof. 
\end{proof}



\begin{thebibliography}{10}

\bibitem{MR1726707}
Gabriella B\"ohm, Florian Nill, and Korn\'el Szlach\'anyi.
\newblock Weak {H}opf algebras. {I}. {I}ntegral theory and {$C^*$}-structure.
\newblock {\em J. Algebra}, 221(2):385--438, 1999.

\bibitem{bw}
Tomasz Brzezinski and Robert Wisbauer.
\newblock {\em Corings and comodules}, volume 309 of {\em London Mathematical
  Society Lecture Note Series}.
\newblock Cambridge University Press, Cambridge, 2003.

\bibitem{cmz}
Stefaan Caenepeel, Gigel Militaru, and Shenglin Zhu.
\newblock {\em Frobenius and separable functors for generalized module
  categories and nonlinear equations}, volume 1787 of {\em Lecture Notes in
  Mathematics}.
\newblock Springer-Verlag, Berlin, 2002.

\bibitem{MR4834012}
Amanda Hernandez, Chelsea Walton, and Harshit Yadav.
\newblock On non-counital {F}robenius algebras.
\newblock {\em J. Algebra Appl.}, 24(1):Paper No. 2550018, 30, 2025.

\bibitem{MR1653294}
T.~Y. Lam.
\newblock {\em Lectures on modules and rings}, volume 189 of {\em Graduate
  Texts in Mathematics}.
\newblock Springer-Verlag, New York, 1999.

\bibitem{lam}
T.~Y. Lam.
\newblock {\em A first course in noncommutative rings}, volume 131 of {\em
  Graduate Texts in Mathematics}.
\newblock Springer-Verlag, New York, second edition, 2001.

\bibitem{mcl}
Saunders Mac~Lane.
\newblock {\em Categories for the working mathematician}, volume~5 of {\em
  Graduate Texts in Mathematics}.
\newblock Springer-Verlag, New York, second edition, 1998.

\bibitem{mntg}
Susan Montgomery.
\newblock {\em Hopf algebras and their actions on rings}, volume~82 of {\em
  CBMS Regional Conference Series in Mathematics}.
\newblock Published for the Conference Board of the Mathematical Sciences,
  Washington, DC; by the American Mathematical Society, Providence, RI, 1993.

\bibitem{pierce}
Richard~S. Pierce.
\newblock {\em Associative algebras}, volume~9 of {\em Studies in the History
  of Modern Science}.
\newblock Springer-Verlag, New York-Berlin, 1982.

\bibitem{sy-gen}
Andrzej Skowro\'{n}ski and Kunio Yamagata.
\newblock A general form of non-{F}robenius self-injective algebras.
\newblock {\em Colloq. Math.}, 105(1):135--141, 2006.

\bibitem{sy-bk1}
Andrzej Skowro\'{n}ski and Kunio Yamagata.
\newblock {\em Frobenius algebras. {I}}.
\newblock EMS Textbooks in Mathematics. European Mathematical Society (EMS),
  Z\"{u}rich, 2011.
\newblock Basic representation theory.

\bibitem{stenstr_quot}
Bo~Stenstr{\"o}m.
\newblock {\em Rings of quotients. {An} introduction to methods of ring
  theory}, volume 217 of {\em Grundlehren Math. Wiss.}
\newblock Springer, Cham, 1975.

\end{thebibliography}

\addcontentsline{toc}{section}{References}

\Addresses

\end{document}